\DeclareSymbolFont{cyrletters}{OT2}{wncyr}{m}{n}
\DeclareMathSymbol{\Sha}{\mathalpha}{cyrletters}{"58}
\newtheorem{theorem}{Theorem}[section]
\newtheorem*{proposition*}{Proposition}
\newtheorem*{questiona*}{Question A}
\newtheorem*{questionb*}{Question B}
\newtheorem*{theorem*}{Theorem}
\newtheorem*{question*}{Question}
\theoremstyle{definition}
\newtheorem{example}[theorem]{Example}
\newtheorem{claim}[theorem]{Claim}
\newtheorem{remark}[theorem]{Remark}
\theoremstyle{remark}
\title{Low rank specializations of elliptic surfaces}
\author{Mentzelos Melistas}
\address{University of Twente, Department of Applied Mathematics, Drienerlolaan 5, 7522 NB Enschede, The Netherlands}
\date{\today}
\begin{document}

\maketitle

\begin{abstract}
    Let $E/\mathbb{Q}(T)$ be a non-isotrivial elliptic curve of rank $r$. A theorem due to Silverman implies that the rank $r_t$ of the specialization $E_t/\mathbb{Q}$ is at least $r$ for all but finitely many $t \in \mathbb{Q}$. Moreover, it is conjectured that $r_t \leq r+2$, except for a set of density $0$. In this article, when $E/\mathbb{Q}(T)$ has a torsion point of order $2$, under an assumption on the discriminant of a Weierstrass equation for $E/\mathbb{Q}(T)$, we produce an upper bound for $r_t$ that is valid for infinitely many $t$. We also present two examples of non-isotrivial elliptic curves $E/\mathbb{Q}(T)$ such that $r_t \leq r+1$ for infinitely many $t$.
\end{abstract}

\section{introduction}

Let $\pi : \mathcal{E} \rightarrow \mathbb{P}^1$ be a non-constant elliptic surface defined over $\mathbb{Q}$. By this we mean a two dimensional projective variety $\mathcal{E}$ endowed with a morphism $\pi$ as above such that all but finitely many fibers of $\pi$ are curves of genus one and such that there exists a section $\sigma$ to $\pi$. Let $E/\mathbb{Q}(T)$ be the generic fiber of $\mathcal{E}$, where $T$ is a coordinate of $\mathbb{P}_{\mathbb{Q}}^1$. The Mordell-Weil theorem for function fields (see \cite[Page 230]{silverman2}) asserts that $E(\mathbb{Q}(T))$ is a finitely generated group.

Denote by $r$ the rank of $E/\mathbb{Q}(T)$ of $\mathcal{E}$ and by $r_t$ the rank of the specialization $E_t/\mathbb{Q}$ of $\mathcal{E}$ at $T=t$, provided that $E_t/\mathbb{Q}$ is an elliptic curve. It follows from a theorem of Silverman (see \cite[Theorem C]{silvermanspecialization} or \cite[Theorem III.11.4]{silverman2}) that $r \leq r_t$ for all but finitely many $t$. A natural question to wonder is how far the above inequality is from being an equality. Assume from now on that $\pi : \mathcal{E} \rightarrow \mathbb{P}_{\mathbb{Q}}^1$ is non-isotrivial. Let $$\mathcal{N}(E)=\{ t \in \mathbb{P}^1(\mathbb{Q}) \: : \: E_t/\mathbb{Q} \text{ is an elliptic curve and } r_t=r \}$$ and 
$$\mathcal{F}(E)=\{ t \in \mathbb{P}^1(\mathbb{Q}) \: : \: E_t/\mathbb{Q} \text{ is an elliptic curve and } r_t \geq 2+r \}.$$
The density conjecture (see \cite[Page 556]{silvermandivisibility} or
\cite[Appendix A]{conradconradhelfgott}) predicts that $\mathcal{N}(\mathcal{E})$ is infinite while $\mathcal{F}(\mathcal{E})$ has density zero. 

Proving either of these two statements at the moment seems to be out of reach. Moreover, not a single (unconditional) example of a non-isotrivial elliptic surface for which $\mathcal{N}(E)$ is infinite is known. On the other hand, conditional examples relying on standard conjectures in analytic number theory have been found. For instance, under the assumption that there are infinitely many Mersenne primes, Caro and Pasten in \cite{caropasten} found an elliptic curve $E/\mathbb{Q}(T)$ of rank $0$ and infinitely many primes $q$ such that $E_q/\mathbb{Q}$ has rank $0$ as well. Moreover, work of Neuman and Setzer (see \cite{setzer} and \cite{neumann}) on elliptic curves with prime conductor combined with a conjecture of Bouniakowsky \cite{bouniakowsky} provides another such example.

For every $i \geq 1$ we let $$\mathcal{I}_i(E)=\{ t \in \mathbb{P}^1(\mathbb{Q}) \: : \: E_t \text{ is an elliptic curve and } r_t \leq r+1 \}.$$ In this article we are interested in providing examples of elliptic surfaces and explicit positive integers $i$ such that $\mathcal{I}_i(E)$ is infinite. Our first result is the following theorem (see Theorem \ref{thm1stfamily} and Theorem \ref{thm2ndfamily} below).

\begin{theorem}\label{thm2}
    Let $E/\mathbb{Q}(T)$ be either the elliptic curve given by the Weierstrass equation $$y^2=x^3+Tx^2-x$$ or the elliptic curve given by the Weierstrass equation $$y^2+xy=x^3+\frac{T-1}{4}x^2-x.$$ Then there exist infinitely many integers $n$ such that $\mathrm{rk } \:  E_n(\mathbb{Q}) \leq 1. $
    In particular, the set $\mathcal{I}_1(E)$ is infinite.
\end{theorem}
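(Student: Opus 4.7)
The natural approach is a 2-descent via the rational 2-torsion that both families admit. For the first curve $E:y^2=x^3+Tx^2-x$, the point $(0,0)$ is 2-torsion; for the second, completing the square gives $y^2=x^3+(T/4)x^2-x$, so $(0,0)$ is again 2-torsion after rescaling. Thus each specialization $E_n$ carries a 2-isogeny $\phi:E_n\to E_n'$ with rational kernel, and standard formulas give the isogenous curves explicitly. For the first family, $E_n':y^2=x^3-2nx^2+(n^2+4)x$ with $\Delta(E_n)=16(n^2+4)$ and $\Delta(E_n')=-2^8(n^2+4)^2$; for the second family, the analogous computation produces the irreducible quadratic $n^2+64$ in place of $n^2+4$. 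In both cases, the primes of bad reduction lie in $\{2\}$ together with the odd primes dividing this quadratic.

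The classical 2-isogeny descent formula yields
\[
r_n \leq \dim_{\mathbb{F}_2} \mathrm{Sel}^{\phi}(E_n/\mathbb{Q}) + \dim_{\mathbb{F}_2} \mathrm{Sel}^{\hat\phi}(E_n'/\mathbb{Q}) - 2.
\]
Viewing each Selmer group inside $\mathbb{Q}^{*}/(\mathbb{Q}^{*})^{2}$, the first one is generated by $-1$ (since the image of the descent map on $E_n$ lies in the subgroup generated by primes of $b=-1$), whence $\dim\mathrm{Sel}^{\phi}(E_n)\leq 1$. The second is supported on $-1$, $2$, and the odd primes dividing $n^2+4$, a priori of dimension at most $\omega(n^2+4)+2$. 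To sharpen this, I would impose the archimedean and $2$-adic local Selmer conditions. The archimedean one is immediate: the cubic $x^3-2nx^2+(n^2+4)x=x\bigl(x^2-2nx+n^2+4\bigr)$ has positive-definite quadratic factor, so any real point of $E_n'$ has $x\geq 0$ and the descent map lands in the positive classes, eliminating $-1$ from the Selmer group. An explicit analysis of the image of $E_n'(\mathbb{Q}_2)$ in $\mathbb{Q}_2^{*}/(\mathbb{Q}_2^{*})^{2}$, for $n$ restricted to a suitable residue class modulo a small power of $2$, should further cut out one dimension, leaving $\dim\mathrm{Sel}^{\hat\phi}(E_n')\leq\omega(n^2+4)$, and hence $r_n\leq\omega(n^2+4)-1$.

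The analytic input is an almost-prime sieve result in the style of Iwaniec: the irreducible quadratic polynomial $n^2+4$ (and similarly $n^2+64$) represents $P_2$-integers (products of at most two primes) for infinitely many $n$, and such sieves accommodate any fixed congruence condition on $n$. Intersecting this with the residue class imposed by the $2$-adic analysis produces infinitely many $n$ with $\omega(n^2+4)\leq 2$, and hence $r_n\leq 1$. The second family is handled by the same machinery after the substitution bringing it into the form of the first.

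The main obstacle I anticipate is the $2$-adic local Selmer computation. Unlike the odd primes $p\mid n^2+4$, at which $E_n$ has multiplicative reduction and the image of the descent map is transparent, the prime $2$ is a prime of additive reduction for these families, and cutting out one full dimension of $\mathbb{Q}_2^{*}/(\mathbb{Q}_2^{*})^{2}$ from the Selmer constraint requires a careful case analysis modulo $8$ or $16$. Matching the resulting congruence condition on $n$ with the sieve input, so that the overall set remains infinite, is the delicate part of the argument.
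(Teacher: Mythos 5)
Your overall strategy is the same as the paper's: bound $\mathrm{rk}\,E_n(\mathbb{Q})$ by (roughly) the number of prime factors of the quadratic $n^2+4$ (resp.\ $n^2+64$) via the $2$-isogeny descent through $(0,0)$, and then invoke an Iwaniec-type sieve to make that quadratic a $P_2$-number infinitely often. The paper simply quotes the descent bound as a black box (the Aguirre--Lozano-Robledo--Peral inequality $\mathrm{rk}\,E(\mathbb{Q})\le\nu(A^2-4B)+\nu(B)-1$ for $y^2=x^3+Ax^2+Bx$, and the variant $\mathrm{rk}\le 2\alpha+\mu-1$ in terms of reduction types), whereas you re-derive it. For the first family your argument is essentially complete, and in fact simpler than you anticipate: $n^2+4\in P_2$ forces $n$ odd (if $n$ is even and nonzero then $4\mid n^2+4$ and $n^2+4\notin P_2$), so $2$ never lies in the support of the relevant Selmer group; its a priori dimension is then $\omega(n^2+4)+1\le 3$, the positive definiteness of $(x-n)^2+4$ removes $-1$, and $r_n\le 1+2-2=1$ with no $2$-adic analysis needed at all.

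The genuine gap is in the sentence claiming the second family is handled ``by the same machinery after the substitution bringing it into the form of the first.'' Completing the square and clearing denominators turns $y^2+xy=x^3+\frac{n-1}{4}x^2-x$ into $y^2=x^3+nx^2-16x$, so here $B=-16$, not $-1$. The Selmer group supported on the divisors of $B$ is now generated by the classes of $-1$ and $2$ and has dimension up to $2$, and your archimedean trick removes nothing from it, since $x^2+nx-16$ has a real root of each sign. The naive count therefore gives only $r_n\le 2+2-2=2$, and to reach $r_n\le 1$ you must carry out a local computation at $2$ on this Selmer group (e.g.\ show that the homogeneous space $N^2=2M^4+nM^2e^4-8e^4$, and its twists by $-1$ and $-2$, have no $\mathbb{Q}_2$-point for the relevant residue classes of $n$), which you have not done; this is where the anticipated ``delicate part'' actually lives, not in the dual Selmer group. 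The paper sidesteps it by using the reduction-type form of the descent bound on the minimal model: $n^2+64\in P_2$ forces $n$ odd (so one may take $n\equiv 1\pmod 4$), the given equation is then integral with $\Delta=n^2+64$ odd and $c_4=n^2+48$ coprime to it, hence all bad reduction is multiplicative at the at most two primes dividing $n^2+64$, giving $r_n\le 2\cdot 0+2-1=1$. You would need to supply this step, or the equivalent $2$-adic Selmer computation, to complete the second family.
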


Before we state our next theorem we need to introduce some notation. If $F(x)$ is an irreducible polynomial with integer coefficients, then we write $\rho_F(p)$ for the number of solutions of the congruence $$F(x) \equiv 0 \: ( \text{mod } p).$$

\begin{theorem}\label{theorem1}
    Let $E/\mathbb{Q}(T)$ be a non-isotrivial elliptic curve whose Mordell-Weil group contains a point of order $2$. Fix an integral Weierstrass equation for $E/\mathbb{Q}(T)$ and denote by $\Delta(T) \in \mathbb{Z}[T]$ its discriminant.
    \begin{enumerate}
          \item Assume that $\Delta(T)=p_1^{a_1} \cdots p_m^{a_m} f(T)^k$ where $m \geq 0$,  $k, a_1,...,a_m>0$, and $f(T)$ is an irreducible polynomial with integral coefficients such that $\rho_{f}(p)<p$ for every prime $p$. Then there exist infinitely many positive integers $n$ such that $$\mathrm{rk } \: E_n(\mathbb{Q}) \leq 2\deg(\Delta)+2m+1. $$ In particular, the set $\mathcal{I}_{2\deg(\Delta)+2m+1}(E)$ is infinite.
         \item Assume that $\Delta(T)=p_1^{a_1} \cdots p_m^{a_m}T^{k_1}f(T)^{k_2}$, where $m \geq 0$, $k_1,k_2,a_1,...,a_m>0$, and $f(T) \neq \pm T$ is an irreducible polynomial with $\rho_{f}(p)<p$ for every prime $p$. Assume also that  $\rho_{f}(p)<p-1$ for every prime $p \leq \deg (f)+1$ such that $p \nmid f(0)$. Then there exist infinitely many positive integers $n$ such that $$\mathrm{rk } \:  E_n(\mathbb{Q}) \leq 4\deg(f)+2m+2. $$ In particular, the set $\mathcal{I}_{4\deg(f)+2m+2}(E)$ is infinite.
    \end{enumerate}
\end{theorem}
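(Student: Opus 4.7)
My plan is to combine a classical 2-descent rank bound for the specializations $E_n/\mathbb{Q}$ with an almost-prime sieve applied to the polynomial $f(T)$ in part (i) and to the product $T\cdot f(T)$ in part (ii). First, using the rational $2$-torsion point of $E/\mathbb{Q}(T)$, I may assume the integral Weierstrass model has the form
\[
y^2 = x\bigl(x^2 + A(T)\,x + B(T)\bigr),\qquad A,B\in\mathbb{Z}[T],
\]
whose discriminant is (up to a power of $2$) equal to $B(T)^2\bigl(A(T)^2-4B(T)\bigr)$, so in particular all primes of bad reduction of $E_n/\mathbb{Q}$ divide $\Delta(n)$. For any integer $n$ for which $E_n/\mathbb{Q}$ is an elliptic curve, the 2-isogeny $\phi\colon E_n\to E_n'$ together with its dual $\widehat\phi$ yields the descent inequality
\[
\mathrm{rk}\,E_n(\mathbb{Q}) + 2 \leq \dim_{\mathbb{F}_2}\mathrm{Sel}^{(\phi)}(E_n/\mathbb{Q}) + \dim_{\mathbb{F}_2}\mathrm{Sel}^{(\widehat\phi)}(E_n'/\mathbb{Q}),
\]
and each Selmer group embeds into $\mathbb{Q}(S,2)$, where $S$ consists of the archimedean place, the prime $2$, and the primes dividing $\Delta(n)$. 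This produces a bound of the shape $\mathrm{rk}\,E_n(\mathbb{Q}) \leq 2\,\omega(\Delta(n)) + c$ for a small explicit constant $c$ depending only on the chosen Weierstrass equation.

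To make the right-hand side small for infinitely many $n$, in part (i) I use the factorization $\Delta(n) = p_1^{a_1}\cdots p_m^{a_m}\,f(n)^k$, which gives $\omega(\Delta(n)) \leq m + \omega(f(n))$. The hypothesis $\rho_f(p) < p$ for every prime $p$ is precisely the admissibility condition for an Iwaniec/Halberstam--Richert weighted sieve, which yields infinitely many positive integers $n$ for which $\omega(f(n))$ is bounded by an explicit constant depending only on $\deg f$. Combining with the 2-descent bound gives the inequality in part (i), modulo bookkeeping of the constants to match the stated coefficient $2\deg(\Delta)+2m+1$.

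Part (ii) requires bounding $\omega(n)$ as well, because of the extra factor $T^{k_1}$ in the discriminant. The natural remedy is to apply the sieve to the degree-$(\deg f + 1)$ polynomial $g(T)=T\cdot f(T)$: its local density at $p$ equals $\rho_f(p)$ when $p\mid f(0)$ and equals $1+\rho_f(p)$ when $p\nmid f(0)$, so the admissibility condition $\rho_g(p) < p$ is automatic for $p > \deg f + 1$ by Lagrange's bound $\rho_f(p)\leq \deg f$, and reduces to the stated hypothesis $\rho_f(p) < p-1$ at the small primes $p\leq \deg f+1$ with $p\nmid f(0)$. The sieve then produces infinitely many $n$ with both $\omega(n)$ and $\omega(f(n))$ controlled, and the argument closes as in part (i). The main technical obstacle I anticipate is matching the precise numerical constants $2\deg(\Delta)+2m+1$ and $4\deg(f)+2m+2$: the $\mathbb{F}_2$-dimension of the two Selmer groups, the almost-prime output of the weighted sieve, and the accounting of the prime $2$, the archimedean place, and the fixed bad primes $p_i$ on both sides of the descent must all be tracked carefully. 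A minor additional concern is to ensure the sieved integers $n$ avoid the finite set where $E_n/\mathbb{Q}$ degenerates, which is harmless since that set has density zero.
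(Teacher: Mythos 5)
Your overall strategy (a 2-descent rank bound for the specializations combined with an almost-prime sieve on the variable part of the discriminant) is the same as the paper's, and your admissibility check for $g(T)=T\,f(T)$ is correct; but the two places you flag as ``bookkeeping'' are genuine gaps. First, the descent inequality as you set it up --- both Selmer groups embedded in $\mathbb{Q}(S,2)$ with $S$ containing $2$, $\infty$, and all primes dividing $\Delta(n)$ --- only gives $\mathrm{rk}\,E_n(\mathbb{Q}) \leq 2\,\omega(\Delta(n)) + c$ with $c \geq 0$ (typically $c=2$), whereas the stated bound in part (i) needs $c=-1$ when $k=1$, since then $\deg\Delta=\deg f$ and the sieve only guarantees $\omega(\Delta(n)) \leq m+\deg f+1$. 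The paper instead uses the sharper bounds $\mathrm{rk} \leq \nu(B)+\nu(A^2-4B)-1$, resp.\ $\mathrm{rk} \leq 2\alpha+\mu-1$ with $\alpha,\mu$ the numbers of additive and multiplicative bad primes (its Theorem 2.1, quoted from Aguirre--Lozano-Robledo--Peral and Caro--Pasten); you need one of these refined statements, and the bad-primes version also sidesteps a wrinkle in your reduction to the model $y^2=x(x^2+A(T)x+B(T))$: passing to that model rescales the discriminant, so the primes dividing $B(n)\bigl(A(n)^2-4B(n)\bigr)$ need not lie among those dividing the originally fixed $\Delta(n)$.

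Second, and more seriously, your plan for part (ii) --- sieving the reducible polynomial $T\,f(T)$ over all integers --- cannot reach the stated constant $4\deg f+2m+2$. For a product of two distinct irreducible polynomials the weighted sieve (Halberstam--Richert, Theorem 10.4, which is exactly what the paper uses for its more general part (iii)) only gives $n\,f(n)\in P_s$ with $s$ noticeably larger than $2\deg f+2$ in the relevant low-degree cases (e.g.\ $s=7$ for total degree $3$, $s=9$ for total degree $4$), so your route produces a bound of the shape $2(m+s)-1$, i.e.\ you would be reproving part (iii) with a weaker constant rather than part (ii). The paper's proof instead restricts to prime specializations $n=p$ and applies Richert's theorem on polynomial values at prime arguments to get $f(p)\in P_{2\deg f+1}$, so that $\Delta(p)$ has at most $m+2\deg f+2$ distinct prime factors; the hypothesis $\rho_f(p)<p-1$ for small primes $p\nmid f(0)$ is precisely the admissibility condition of that prime-argument theorem (you used it, correctly but to different effect, for the admissibility of $T f$). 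As a side remark, even the paper's own count $2(m+2\deg f+2)-1$ equals $4\deg f+2m+3$, one more than the displayed bound, so the exact constant in part (ii) requires care in any complete write-up.
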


In fact, we prove a more general theorem where we also treat the general case where $\Delta(T)$ factors into a product of any number of irreducible polynomials. To prove our results we combine a bound on ranks of elliptic curves over $\mathbb{Q}$ that depends on their discriminants coming from $2$-descent (see \cite{alrp} and \cite{caropasten}) with results on almost prime values of polynomials that are derived from sieve methods in analytic number theory (see e.g. \cite{operadecribro} or \cite{sievemethodsbook}).

\section{Proofs of Theorems \ref{thm2} and \ref{theorem1}}

In this section we prove Theorems \ref{thm2} and \ref{theorem1}. In fact we will prove more general versions of the theorems stated in the introduction. Before we begin our proofs let us recall a theorem that provides an upper bound for the rank of elliptic curves with a torsion point of order $2$.

\begin{theorem}\label{thmdescent}(see \cite[Proposition 1.1]{alrp} and \cite[Theorem 2.3]{caropasten})
    Let $E/\mathbb{Q}$ be an elliptic curve that has a point of order $2$. 
    \begin{enumerate}
        \item If $E/\mathbb{Q}$ has an integral Weierstrass equation of the form $$y^2=x^3+Ax^2+Bx,$$ then $$\mathrm{rk} \: E(\mathbb{Q}) \leq \nu(A^2-4B)+\nu(B)-1,$$ where $\nu(n)$ be the number of positive prime divisors of a non-zero integer $n$.
        \item Let $\alpha$ and $\mu$ be the number of places of additive and of multiplicative reduction of $E/\mathbb{Q}$, respectively. Then $$\mathrm{rk} \: E(\mathbb{Q}) \leq 2\alpha+\mu-1.$$
    \end{enumerate}

\end{theorem}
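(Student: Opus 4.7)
The plan is to deduce both bounds from classical $2$-isogeny descent driven by the rational $2$-torsion point. For part (i), writing $E: y^2=x(x^2+Ax+B)$, the point $(0,0)$ generates the kernel of a $\mathbb{Q}$-rational isogeny $\phi: E\to E'$ with $E': y^2=x(x^2-2Ax+(A^2-4B))$; let $\hat\phi:E'\to E$ denote its dual. The standard descent formula
$$2^{\mathrm{rk}\, E(\mathbb{Q})}=\frac{|E'(\mathbb{Q})/\phi E(\mathbb{Q})|\cdot|E(\mathbb{Q})/\hat\phi E'(\mathbb{Q})|}{4}$$
reduces the problem to bounding the two descent quotients. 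The Kummer map $E'(\mathbb{Q})/\phi E(\mathbb{Q}) \hookrightarrow \mathbb{Q}^*/(\mathbb{Q}^*)^2$ sends $(x,y)\mapsto x$ and $(0,0)\mapsto B$, and a local analysis at good primes confines the image to the subgroup generated by $-1$ and the prime divisors of $B$, giving order at most $2^{\nu(B)+1}$. The symmetric bound on the other factor is $2^{\nu(A^2-4B)+1}$. Multiplying and dividing by $4$ yields $\mathrm{rk}\, E(\mathbb{Q})\leq\nu(B)+\nu(A^2-4B)$. The remaining saving of $1$ would come from a product-formula / Hilbert-symbol identity: the sign class $-1$ cannot independently enlarge both images simultaneously, because the two Kummer classes are constrained by a global reciprocity relation.

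For part (ii), I would organize the same Kummer bound prime by prime rather than through the factorizations of $B$ and $A^2-4B$. At a prime $p$ of good reduction the local image is trivial. At a prime of multiplicative reduction, either a N\'eron model / Tate curve analysis or a direct computation with the minimal Weierstrass data shows that the combined local contribution to the two descent quotients is at most $1$. At a prime of additive reduction, the worst case is a combined contribution of $2$. Summing the local contributions across all bad primes, incorporating the archimedean contribution, and applying the same global reciprocity saving as in part (i), yields $\mathrm{rk}\, E(\mathbb{Q})\leq 2\alpha+\mu-1$.

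The main technical obstacle is the local analysis at primes of multiplicative reduction in part (ii): one needs to pin down that the images of $E(\mathbb{Q}_p)$ and $E'(\mathbb{Q}_p)$ under the respective local Kummer maps lie in a single shared one-dimensional subspace rather than two independent ones. This is exactly what converts a naive bound of $2(\alpha+\mu)$ into the sharper $2\alpha+\mu$. The global $-1$ refinement in both parts should then be routine once these local constraints and the product formula are in place.
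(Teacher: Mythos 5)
This theorem is imported: the paper cites \cite[Proposition 1.1]{alrp} for part (i) and \cite[Theorem 2.3]{caropasten} for part (ii) and gives no proof of its own, so there is no internal argument to compare against. Your plan --- descent through the $2$-isogeny $\phi$ with kernel $\{O,(0,0)\}$, the rank formula $2^{\mathrm{rk}\,E(\mathbb{Q})}=|E'(\mathbb{Q})/\phi E(\mathbb{Q})|\cdot|E(\mathbb{Q})/\hat\phi E'(\mathbb{Q})|/4$, and support bounds on the two Kummer images --- is exactly the route taken in those references, and it does give $\mathrm{rk}\,E(\mathbb{Q})\le\nu(B)+\nu(A^2-4B)$ essentially for free. (Minor bookkeeping slip: $E(\mathbb{Q})/\hat\phi E'(\mathbb{Q})$ is the quotient whose image is supported on the divisors of $B$ and contains the class of $B$; the image of $E'(\mathbb{Q})/\phi E(\mathbb{Q})$ is supported on the divisors of $A^2-4B$. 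The totals are symmetric, so the bound is unaffected.)

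The two points you defer are, however, the actual content of the theorem, and one of them is misattributed. The saving of $1$ in part (i) is not a Hilbert-symbol or product-formula phenomenon; it is a purely archimedean constraint. A negative class $d$ lies in the image for $E$ only if the homogeneous space $w^2=du^4+Au^2v^2+(B/d)v^4$ has a real point, which (when $B>0$) forces $A>0$ and $A^2-4B\ge 0$; the mirror condition for $E'$ forces $A<0$. Running through the possible sign patterns of $B$ and $A^2-4B$ shows that at least one of the two images consists entirely of positive classes, and that is where the factor of $2$ is saved. This needs to be written out; ``routine once the product formula is in place'' will not produce it. For part (ii), the assertion that a multiplicative prime contributes $1$ rather than $2$ to the combined local images is exactly the Tate-curve/N\'eron component-group computation (equivalently, local duality identifying the two local images as exact annihilators and computing their unramified parts), and your sketch names this obstacle without resolving it; you also need to treat $p=2$ and the archimedean place separately, since they do not fit the good/multiplicative/additive trichotomy as you have set it up. In short: right method, the same one the cited sources use, but the proposal postpones precisely the steps that make the bounds sharp.
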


\begin{remark}
    Elliptic curves for which the inequality of Part $(i)$ of Theorem \ref{thmdescent} is an equality are called elliptic curves of maximal Mordell-Weil rank. Examples of such curves have been exhibited by Aguirre, Lozano-Robledo, and Peral in \cite{alrp}.
\end{remark}

 Throughout the rest of this section we will denote by $P_r$ the set of positive integers with at most $r$ prime divisors, counted with multiplicity. We are now ready to proceed to the proof of Theorem \ref{thm2} for one of the two elliptic curves.

\begin{theorem}\label{thm1stfamily}
    Consider the elliptic curve $E/\mathbb{Q}(T)$ given by $$y^2=x^3+Tx^2-x.$$ Then there exist infinitely many integers $n$ such that $E_n/\mathbb{Q}$ has Mordell-Weil rank at most $1$. Moreover, there exists a positive constant $C$ such that if $X$ is sufficiently large, then $$\#\{n \: : \:  n \leq X \text{ and } \: \mathrm{rk } \: E_n(\mathbb{Q}) \leq 1 \} \geq C \frac{X}{\log X}.$$ 
\end{theorem}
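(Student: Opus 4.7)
The plan is to specialize the descent input to this curve and reduce the question to one about almost-prime values of a polynomial. The equation $y^2=x^3+nx^2-x$ has $A=n$ and $B=-1$ in the notation of Part $(i)$ of Theorem \ref{thmdescent}, so $A^2-4B=n^2+4$ and $\nu(B)=\nu(-1)=0$. Substituting gives
$$\mathrm{rk } \: E_n(\mathbb{Q}) \leq \nu(n^2+4)+\nu(-1)-1 = \nu(n^2+4)-1.$$
I would first note that the discriminant of the generic Weierstrass equation is $16(T^2+4)$, which never vanishes on $\mathbb{R}$, so $E_n/\mathbb{Q}$ is an elliptic curve for every integer $n$. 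In particular, $\mathrm{rk } \: E_n(\mathbb{Q})\leq 1$ as soon as $n^2+4\in P_2$.

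It therefore suffices to establish the purely analytic statement that the set of positive integers $n\leq X$ with $n^2+4\in P_2$ has cardinality $\gg X/\log X$ for $X$ sufficiently large. The polynomial $f(x)=x^2+4$ is irreducible, and I would verify that it satisfies $\rho_f(p)<p$ at every prime: for odd $p$ one has $\rho_f(p)\in\{0,1,2\}$, while $\rho_f(2)=1$ since $0$ is the only residue mod $2$ with $f\equiv 0$. This is exactly the local condition required by the classical half-dimensional sieve of Iwaniec, which, applied to $f$, produces $\gg X/\log X$ values of $n\leq X$ with $n^2+4$ having at most two prime factors counted with multiplicity.

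Combining the two ingredients gives the theorem at once. The main substantive step is the appeal to Iwaniec's sieve (imported from analytic number theory); the algebraic side is just a direct substitution into Theorem \ref{thmdescent}, exploiting the fact that in this family $|B|=1$ is forced to contribute nothing to the bound. The same scheme will presumably drive the companion curve $y^2+xy=x^3+\frac{T-1}{4}x^2-x$ after a change of variables to short Weierstrass form, with the discriminant polynomial playing the role of $n^2+4$.
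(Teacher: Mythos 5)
Your proposal is correct and follows essentially the same route as the paper: the bound $\mathrm{rk}\: E_n(\mathbb{Q}) \leq \nu(n^2+4)-1$ from Part $(i)$ of Theorem \ref{thmdescent}, followed by the half-dimensional sieve applied to $n^2+4$ to produce $\gg X/\log X$ values with $n^2+4\in P_2$. The only cosmetic difference is that the paper first shifts to $h(n')=(n'+1)^2+4=n'^2+2n'+5$ so as to match the exact hypotheses of the quantitative sieve statement it cites (odd constant term); your verification that $\rho_f(2)=1$ addresses the same local condition.
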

\begin{proof}
    For every integer $n$ consider the elliptic curve $E_n/\mathbb{Q}$ given by $$y^2=x^3+nx^2-x$$ We first show that there exist infinitely many $n$ such that $E_n/\mathbb{Q}$ has Mordell-Weil rank at most $1$. Since $E/\mathbb{Q}(T)$ has a torsion point of order $2$ and the torsion subgroup of $E(\mathbb{Q}(T))$ injects in $E_n(\mathbb{Q})$ when $E_n/\mathbb{Q}$ is non-singular, we find that $E_n/\mathbb{Q}$ has a point of order $2$ for all but finitely many $n$. Therefore, it follows from Part $(i)$ of Theorem \ref{thmdescent} that $$\mathrm{rk} \: E_n(\mathbb{Q}) \leq \nu(n^2+4)+\nu(-1)-1=\nu(n^2+4)-1,$$ where $\nu(N)$ be the number of positive prime divisors of a non-zero integer $N$. 
    
    Therefore, if we can find infinitely many $n$ such that $n^2+4\in P_2$, then we are done. On the other hand, since for every prime $p$ we have that $\rho_{n^2+4}(p) \leq 2$ and $\rho_{n^2+4}(2)=1$, we get that $$\Gamma_{n^2+4}= \prod_{p \text{ prime}} \frac{1-\rho_{n^2+4}(p)/p}{1 -1/p}>0.$$ Therefore, it follows from \cite{lemkeoliver} that there exist infinitely many positive integers $n$ such that $n^2+4$ has at most two prime divisors, counted with multiplicity. This proves that there exist infinitely many positive integers $n$ such that $E_n/\mathbb{Q}$ has Mordell-Weil rank at most $1$.

    We now show the inequality of the theorem. Consider the polynomial $$h(n')=(n'+1)^2+4=n'^2+2n'+5.$$ According to \cite[Theorem 1]{kapoor} (see also \cite[Page 172]{iwaniec} and \cite{lemkeoliver}), if $X$ is sufficient large, we have that   $$\# \{n' \: : \:   n' \leq X \text{ and } \: h(n') \in P_2 \} \geq    \frac{1}{144}\prod_{p \text{ prime}} \frac{1-\rho_{h}(p)/p}{1 -1/p}\frac{X}{\log X}.$$ On the other hand, we see that $$\# \{n \: : \:  n \leq X \text{ and } \: n^2+4 \in P_2 \} \geq \# \{n' \: : \:  n' \leq X \text{ and } \: (n'+1)^2+4 \in P_2 \}-1.$$ 
    Therefore, we find that $$\# \{n \: : \:   n \leq X \text{ and } \: n^2+4 \in P_2 \} \geq    \frac{1}{144}\prod_{p \text{ prime}} \frac{1-\rho_{h}(p)/p}{1 -1/p}\frac{X}{\log X} -1 \geq C \frac{X}{\log X},$$ for all $X$ sufficiently large (picking an appropriate constant $C$). This proves our theorem.
\end{proof}

    Consider now the elliptic curve $E/\mathbb{Q}(T)$ given by the Weierstrass equation $$y^2+xy=x^3+\frac{T-1}{4}x^2-x.$$ Specializations of this curve have been studied by Neumann \cite{neumann} and Setzer \cite{setzer}. More precisely, it is proved in \cite[Theorem 2]{setzer} that if $p \neq 2,3,17$ is a prime  and $E/\mathbb{Q}$ is an elliptic curve of conductor $p$ with a torsion point of order $2$, then $p=b^2+64$ for some integer $b \equiv 1 \: (\text{mod } 4)$. In this case $E/\mathbb{Q}$ is isomorphic to either the curve $E_b/\mathbb{Q}$ or to a curve which is isogenous to $E_b/\mathbb{Q}$. 
    
    It follows from Part $(ii)$ of Theorem \ref{thmdescent} that if $p$ is a prime of the form $p=b^2+64$ for some integer $b \equiv 1 \: (\text{mod } 4)$, then the specialization $E_b/\mathbb{Q}$ has rank equal to $0$. According to a conjecture of Bouniakowsky \cite[Page 328]{bouniakowsky} there are infinitely many such numbers $b$. Without relying on any conjectures, we show below that we can find infinitely many integers $n$ such that $n^2+64 \in P_2$, which forces the rank of the corresponding curve $E_n/\mathbb{Q}$ to be at most $1$.

    \begin{theorem}\label{thm2ndfamily}
        Consider the elliptic curve $E/\mathbb{Q}(T)$ given by $$y^2+xy=x^3+\frac{T-1}{4}x^2-x.$$ Then there exist infinitely many integers $n$ such that $E_n/\mathbb{Q}$ has Mordell-Weil rank at most $1$.
    \end{theorem}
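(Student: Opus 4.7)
\emph{Sketch of proof.} The plan mirrors that of Theorem~\ref{thm1stfamily}, but invokes Part~(ii) of Theorem~\ref{thmdescent} instead of Part~(i). First, I would restrict to integers of the form $n = 4m+1$, so that the given Weierstrass equation becomes the integral model
\[
y^2 + xy \;=\; x^3 + m x^2 - x,
\]
whose standard invariants are $b_2 = n$, $b_4 = -2$, $b_6 = 0$, $b_8 = -1$, yielding $c_4 = n^2 + 48$ and $\Delta = n^2 + 64$. Because $\Delta$ is odd, $E_n$ has good reduction at~$2$. Moreover, for every odd prime $p \mid n^2+64$ one has $c_4 \equiv -64 + 48 = -16 \not\equiv 0 \pmod{p}$, and consequently $E_n$ has multiplicative reduction at $p$. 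Thus, in the notation of Theorem~\ref{thmdescent}(ii), $\alpha = 0$ and $\mu = \nu(n^2+64)$, where $\nu$ counts distinct prime divisors.

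Since the $2$-torsion point $(0,0) \in E(\mathbb{Q}(T))$ specializes to a rational $2$-torsion point on $E_n/\mathbb{Q}$, Part~(ii) of Theorem~\ref{thmdescent} applies and yields
\[
\mathrm{rk}\, E_n(\mathbb{Q}) \;\leq\; 2\alpha + \mu - 1 \;=\; \nu(n^2+64) - 1.
\]
It therefore suffices to exhibit infinitely many $n = 4m+1$ for which $n^2+64 \in P_2$.

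To this end I would apply a sieve to the quadratic polynomial
\[
h(m) \;=\; (4m+1)^2 + 64 \;=\; 16 m^2 + 8 m + 65.
\]
A direct check shows that $h(m)$ is always odd, so $\rho_h(2) = 0$, while for every odd prime $p$ the congruence $h(m) \equiv 0 \pmod p$ has at most two solutions; hence $\rho_h(p) \leq 2 < p$ for all $p$, and the singular series
\[
\prod_{p \text{ prime}} \frac{1 - \rho_h(p)/p}{1 - 1/p}
\]
is strictly positive. Invoking the same sieve result used in the proof of Theorem~\ref{thm1stfamily} (see~\cite{lemkeoliver}) then produces infinitely many $m$ with $h(m) \in P_2$, so that $\nu(n^2+64) \leq 2$ and hence $\mathrm{rk}\, E_n(\mathbb{Q}) \leq 1$, as required.

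The key subtle point is the choice of model: blindly applying Part~(i) of Theorem~\ref{thmdescent} to the short Weierstrass model $Y^2 = X^3 + n X^2 - 16 X$ only gives $\mathrm{rk}\, E_n(\mathbb{Q}) \leq \nu(n^2+64) + \nu(16) - 1 = \nu(n^2+64)$, which from the same sieve input would yield rank at most~$2$. The restriction $n \equiv 1 \pmod 4$ ensures that the original Weierstrass equation is already integral and minimal with good reduction at~$2$, which is exactly what lets us exploit Part~(ii) to save an extra unit in the rank bound.
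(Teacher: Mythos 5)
Your proposal is correct and follows essentially the same route as the paper: both pass to the integral model for $n\equiv 1\pmod 4$, observe that $\Delta=n^2+64$ and $c_4=n^2+48$ force purely multiplicative bad reduction, apply Part~(ii) of Theorem~\ref{thmdescent} to get $\mathrm{rk}\,E_n(\mathbb{Q})\le \nu(n^2+64)-1$, and conclude with the sieve bound producing infinitely many $n$ with $n^2+64\in P_2$. The only cosmetic difference is that you sieve the polynomial $16m^2+8m+65$ directly, whereas the paper sieves $n^2+64$ and then argues separately that any $n$ with $n^2+64\in P_2$ is odd (replacing $n$ by $-n$ to reach $n\equiv 1\pmod 4$).
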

    \begin{proof}
        The proof is similar to the proof of Theorem \ref{thm1stfamily}. For every $n\in \mathbb{Z}$ consider the elliptic curve $E_n/\mathbb{Q}$ given by $$y^2+xy=x^3+\frac{n-1}{4}x^2-x.$$ The discriminant of $E_n/\mathbb{Q}$ is $$\Delta(n)=n^2+64$$ and the $c_4$-invariant is $$c_4(n)=n^2+48.$$ Since $E/\mathbb{Q}(T)$ has a torsion point of order $2$, we find that $E_n/\mathbb{Q}$ has a point of order $2$ for all but finitely many $E_n/\mathbb{Q}$. The strategy that we will follow for the rest of the proof is to try to control the primes of bad reduction of $E_n/\mathbb{Q}$ for sufficiently many integers $n$ and apply Theorem \ref{thmdescent}.

        \begin{claim}
            If $n^2+64 \in P_2$, then $n$ is odd.
        \end{claim}
        \begin{proof}[Proof of the claim]
            If $n^2+64$ is odd, then we must have that $n$ is odd. Therefore, assume that $$n^2+64=2q,$$ where $q$ is a prime (not necessarily distinct from $2$). This means that $n$ is even. If we write $n=2n'$, then $$2q=n^2+64=4n'^2+64,$$ then we see that $q$ is even so $q=2$. But then  $n^2+64=4,$ which is impossible. Therefore, $n$ must be odd.
        \end{proof}
If $n^2+64 \in P_2$, then $n^2+64$ is odd and, hence, $n$ is odd. Moreover, by replacing $n$ with $-n$ if necessary, we can also arrange that $n \equiv 1\: (\text{mod } 4).$ Thus the given Weierstrass equation for $E_n/\mathbb{Q}$ is an integral Weierstrass equation and by looking at the corresponding $c_4$-invariant we see that when $n^2+64 \in P_2$ every divisor of $\Delta(n)$ does not divide $c_4(n)$. This proves that, if $n^2+64 \in P_2$, then the curve $E_n/\mathbb{Q}$ has at most two primes of multiplicative reduction and no primes of additive reduction. Therefore, it follows from Part $(ii)$ of Theorem \ref{thmdescent} that $$\mathrm{rk} \: E_n(\mathbb{Q}) \leq 2 \cdot 0 +\mu_n-1=\mu_n-1,$$ where $\mu_n \leq 2$ is the number of primes of multiplicative reduction of $E_n/\mathbb{Q}$. This shows that when $n^2+64 \in P_2$, we have that $\mathrm{rk} \: E_n(\mathbb{Q}) \leq 1$.

Since for every prime $p$ we have that $\rho_{n^2+64}(p) \leq 2$ and $\rho_{n^2+64}(2)=1$, we get that $$\Gamma_{n^2+64}= \prod_{p \text{ prime}} \frac{1-\rho_{n^2+64}(p)/p}{1 -1/p}>0.$$ Therefore, it follows from \cite{lemkeoliver} that there exist infinitely many $n$ such that $n^2+64 \in P_2$.  Hence, there exist infinitely many integers $n$ such that $\mathrm{rk} \: E_n(\mathbb{Q}) \leq 1$.
    \end{proof}

Before we proceed to the proof of a slightly more general version of Theorem \ref{theorem1}, we need to recall two theorems on almost-prime values of polynomials that are derived from analytic number theory and will be needed in our proofs.

\begin{theorem}( \cite[Theorem 6]{richert} and \cite[Theorem 7]{richert})\label{thmdivisors}
    Let $F(x)$ be an irreducible polynomial of degree $g \geq  1$  with integral coefficients. Assume that $\rho_F(p)<p$ for every prime $p$.
    \begin{enumerate}
        \item Then there exists a constant $X_0(F)$ that depends on
        $F$ such that for every $X \geq X_0(F)$ we have that
        $$\#\{n \: : \: 1 \leq n \leq X, \: F(n) \in P_{g+1} \} \geq \frac{2}{3} \prod_p \frac{1-\rho_{F}(p)/p}{1 -1/p} \frac{X}{\log(X)}.$$ In particular, there exist infinitely many integers $n$ such that $F(n)$ has at most $g+1$ prime factors.
        \item Assume in addition that $\rho_{F}(p)<p-1$ for every prime $p \leq \deg (F)+1$ with $p \nmid F(0)$. Then there exist positive constants $C(F)$ and $X_0(F)$ that depend on
        $F$ such that for every $X \geq X_0(F)$ we have that $$\#\{ p \text{ prime } \: : \: 1 \leq p \leq X, \: F(p) \in P_{2g+1} \} \geq C(F) \frac{X}{\log^2(X)}.$$ In particular, there exist infinitely many prime numbers $p$ such that $F(p)$ has at most $2g+1$ prime factors.
    \end{enumerate}
\end{theorem}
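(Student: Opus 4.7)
The plan is to apply Richert's weighted sieve to the sequence $\mathcal{A} = (F(n))_{1 \leq n \leq X}$ (respectively $\mathcal{A}' = (F(p))_{p \leq X}$ for part (ii)), treated as a sieve problem of dimension one. The underlying strategy is to combine the Selberg/Jurkat--Richert linear upper and lower bound sieves with a logarithmic weight that penalises small prime factors, and then show that the resulting weighted count is positive; this forces the existence of many $n$ (respectively $p$) such that $F(n)$ has few prime factors.

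First I would verify the two standard sieve inputs for $\mathcal{A}$. Irreducibility of $F$ together with the prime ideal theorem in the number field $K = \mathbb{Q}[x]/(F(x))$ (or Landau's prime ideal theorem) supplies the one-dimensional sifting asymptotic
$$\sum_{p \leq z} \frac{\rho_F(p) \log p}{p} \;=\; \log z + O(1).$$
The level of distribution follows from the elementary count $\#\{n \leq X : d \mid F(n)\} = \rho_F(d) X/d + O(\rho_F(d))$ together with $\sum_{d \leq D} \rho_F(d) \ll D (\log D)^{O(1)}$, which allows any $D = X^{1-\varepsilon}$ in the remainder sum. The hypothesis $\rho_F(p) < p$ for every $p$ ensures that the singular product $\prod_p (1 - \rho_F(p)/p)(1 - 1/p)^{-1}$ converges to a strictly positive constant, so each sieve main term is genuinely of order $X/\log X$.

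For part (i), I would choose a sifting radius $y = X^{1/u}$ with $u$ slightly greater than $1$ and introduce the Richert-type weight
$$w_n \;=\; 1 \;-\; \theta \sum_{\substack{p \mid F(n) \\ p < y}} \Bigl(1 - \frac{\log p}{\log y}\Bigr), \qquad \theta > 0.$$
If $F(n)$ has $r$ prime factors (with multiplicity) and $w_n > 0$, then the contribution of primes $<y$ to $\log F(n)$ is strictly less than $\theta^{-1} \log y$; combined with $\log F(n) \leq g \log X + O(1)$ and the fact that each prime factor $\geq y$ contributes at least $\log y$ to $\log F(n)$, one obtains $r \leq g + O(1)$, and the calibration $u \to 1^{+}$, $\theta = 1$ yields precisely $r \leq g+1$. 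Summing $w_n$ splits into a trivial term and a weighted sum over small prime divisors; inserting the Jurkat--Richert linear sieve upper bound for each inner piece gives a main term of the shape
$$\frac{X}{\log X} \cdot \prod_p \frac{1 - \rho_F(p)/p}{1 - 1/p} \cdot \bigl(f(u) - \theta \textstyle\int_1^u F(t)\,dt\bigr) \;+\; o\!\left(\frac{X}{\log X}\right),$$
where $f, F$ are the classical Rosser--Iwaniec linear sieve functions. With the above choice of parameters the bracketed factor exceeds $2/3$, which furnishes the desired quantitative lower bound.

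For part (ii) the same weighted sieve is applied to $\mathcal{A}' = (F(p))_{p \leq X}$; the only substantive change is that the level of distribution for sums over primes in arithmetic progressions drops from $X^{1-\varepsilon}$ to $X^{1/2 - \varepsilon}$ (via Bombieri--Vinogradov), which in the weighted sieve approximately doubles the permissible number of prime factors from $g+1$ to $2g+1$. The extra hypothesis $\rho_F(p) < p-1$ for primes $p \leq \deg F + 1$ with $p \nmid F(0)$ is precisely the local solubility condition: for each such $p$ one needs a residue class $a \not\equiv 0 \pmod p$ with $F(a) \not\equiv 0 \pmod p$, for otherwise every prime input would force $p \mid F(p)$ and the singular product over primes would collapse. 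The main obstacle throughout is the quantitative bookkeeping in the weighted sieve, namely balancing the sifting radius, the weight parameter $\theta$, and the Bombieri-type remainder so as to extract the explicit constants $2/3$ in (i) and $C(F) > 0$ in (ii); these calibrations are precisely the computations carried out in Richert's cited papers and reproduced in the sieve-theory references mentioned earlier.
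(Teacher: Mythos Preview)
Your sketch is a faithful outline of Richert's weighted-sieve argument and is essentially correct as far as it goes. However, you should note that the paper does not prove this theorem at all: it is stated as a black box, with attribution to \cite[Theorems 6 and 7]{richert}, and is then applied in the proof of Theorem~\ref{thmlowerbounds}. So there is no ``paper's own proof'' to compare against.

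That said, a brief comment on your sketch: the overall architecture (linear sieve of dimension one, Richert's logarithmic weight, level of distribution $X^{1-\varepsilon}$ for part~(i) and $X^{1/2-\varepsilon}$ via Bombieri--Vinogradov for part~(ii)) is exactly what Richert does, and your explanation of why the extra local condition in~(ii) is needed is accurate. The one place where your write-up is a bit loose is the claim that the bracketed sieve constant ``exceeds $2/3$'' with the parameter choice $u \to 1^+$, $\theta = 1$: in Richert's paper the constant $2/3$ arises from a specific numerical optimisation of the weight parameters and the linear-sieve functions $f,F$, not simply from a limiting choice, and reproducing that constant requires the explicit computations in \cite{richert} rather than a soft argument. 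For the purposes of this paper that detail is immaterial, since the theorem is only being quoted.
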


\begin{theorem}\label{sievesbookthm}(\cite[Theorem 10.4]{sievemethodsbook})
   Let $F_1(x), F_2(x),...,F_g(x) $ be distinct irreducible polynomials with integral coefficients and write $F(x)=F_1(x)F_2(x) \cdots F_g(x)$ for their product. Assume that $\rho_F(p)<p$ for every prime $p$. Then there exists a positive integer $s$ that can be explicitly computed and depends on $F$ and a positive constant $C(F)$ that depends on $F$ such that for all $X$ sufficiently large we have that $$\#\{n \: : \: 1 \leq n \leq X, \: F(n) \in P_s \} \geq C(F)\frac{X}{\log^g(X)}.$$
\end{theorem}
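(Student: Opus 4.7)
The plan is to apply a weighted sieve---for instance, the Rosser--Iwaniec $\beta$-sieve or Richert's weighted sieve---to the integer sequence $\mathcal{A}=\{F(n) : 1 \le n \le X\}$, viewed as a sifting problem of dimension $g$. The two ingredients required are the local densities at each prime and the level of distribution of $\mathcal{A}$.

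For the local densities, set $\omega(p)=\rho_F(p)$. The hypothesis $\rho_F(p)<p$ gives $\omega(p)/p<1$ for every $p$, so the sieve is non-degenerate. Moreover, for all but finitely many $p$, $\rho_F(p)=\rho_{F_1}(p)+\cdots+\rho_{F_g}(p)$, and the Chebotarev density theorem applied to the splitting field of each $F_i$ shows that the average of $\rho_{F_i}(p)$ over primes is $1$. Consequently $\sum_{p \le z}\omega(p)/p = g\log\log z+O(1)$, and a Mertens-type estimate yields
$$V(z):=\prod_{p \le z}\!\left(1-\frac{\omega(p)}{p}\right)\asymp \frac{C(F)}{(\log z)^g}.$$
This is exactly where the exponent $g$ in the claimed lower bound enters.

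For the level of distribution, the Chinese remainder theorem gives, for squarefree $d$,
$$\#\{n \le X : d \mid F(n)\}=\frac{\rho_F(d)}{d}\,X+O(\rho_F(d)),$$
with $\rho_F$ multiplicative and $\rho_F(d) \le (\deg F)^{\omega(d)}$; this easily provides $\mathcal{A}$ with level of distribution $X^{1-\varepsilon}$. Feeding these inputs into the $\beta$-sieve with sifting parameter $z=X^{1/\tau}$, for a sufficiently large $\tau=\tau(g,\deg F)$, produces the lower bound
$$\#\{n \le X : \text{every prime factor of } F(n) \text{ exceeds } z\} \;\gg\; V(z)\,X \;\gg\; \frac{X}{(\log X)^g}.$$
Any such $n$ has $|F(n)| \le X^{\deg F}$ with all prime factors greater than $X^{1/\tau}$, so $F(n)\in P_s$ for $s=\lfloor \tau\deg F\rfloor$, which is the explicitly computable integer promised by the statement.

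The main obstacle is that the sieve dimension $g$ can be arbitrary: for $g \ge 2$ the Rosser--Iwaniec linear sieve is no longer sharp, and one must use a higher-dimensional $\beta$-sieve whose Buchstab-type functional equations force $\tau$ to grow with $g$ in order for the main term to dominate the error. It is precisely this dependence of $\tau$ on $g$ and on the degrees of the $F_i$ that prevents us from giving $s$ in closed form, and it explains why the statement only asserts that $s$ can be explicitly computed from $F$ rather than exhibiting it.
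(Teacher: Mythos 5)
This theorem is not proved in the paper at all: it is quoted verbatim from Halberstam and Richert (Theorem 10.4 of the sieve methods book), which is Richert's \emph{weighted} sieve in dimension $g$. Your argument is a genuinely different, and essentially correct, route to the literal statement: instead of weights you run the fundamental lemma (high-dimensional $\beta$-sieve) on $\mathcal{A}=\{F(n):n\le X\}$ with sifting level $z=X^{1/\tau}$, using the multiplicativity of $\rho_F$, the bound $\rho_F(d)\le(\deg F)^{\omega(d)}$ for level of distribution $X^{1-\varepsilon}$, and Landau/Chebotarev to get $V(z)\asymp (\log z)^{-g}$; then every surviving $n$ gives $F(n)\in P_s$ with $s=\lfloor\tau\deg F\rfloor$. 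A few small points: $\tau$ need only depend on the sieve dimension $g$ (through the sifting limit of the $\beta$-sieve), with $\deg F$ entering only in the conversion to $s$; you should discard the finitely many $n$ with $F(n)=0,\pm1$ and work with $|F(n)|$ since $P_s$ consists of positive integers; and your double use of $\omega$ (for $\rho_F(p)$ and for the number of prime factors of $d$) is only a notational clash. What the comparison buys: your fundamental-lemma argument is more self-contained and fully adequate for the qualitative claim ``some explicitly computable $s$ exists,'' but it produces an $s$ of size roughly $\tau(g)\deg F$, far larger than what the weighted sieve gives. The paper's reliance on the book's Theorem 10.4 is what yields the small values of $s$ (e.g.\ $s=7,9,10,11$ for $g=2$ and $\deg\Delta=3,4,5,6$ quoted in Remark \ref{rmks}), and these are exactly what feed the explicit rank bounds in the closing examples; with your version of the theorem those numerical conclusions would be substantially weaker.
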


We are now ready to proceed to the proof of a slightly more general version of Theorem \ref{theorem1}.

\begin{theorem}\label{thmlowerbounds}
    Let $E/\mathbb{Q}(T)$ be a non-isotrivial elliptic curve whose Mordell-Weil group contains a point of order $2$. Fix an integral Weierstrass equation for $E/\mathbb{Q}(T)$ and denote by $\Delta(T) \in \mathbb{Z}[T]$ its discriminant.
    \begin{enumerate}
          \item Assume that $\Delta(T)=p_1^{a_1} \cdots p_m^{a_m} f(T)^k$ where $m \geq 0$,  $k, a_1,...,a_m>0$, and $f(T)$ is an irreducible polynomial with integral coefficients such that $\rho_{f}(p)<p$ for every prime $p$. Then there exists a constant $X_0(f)$ that depends only on $f$ such that for every $X \geq X_0(f)$ we have that
         $$\#\{n \: : \: 1 \leq n \leq X, \: \mathrm{rk } \: E_n(\mathbb{Q}) \leq 2\deg(\Delta)+2m+1 \} \geq \frac{2}{3} \prod_p \frac{1-\rho_{f}(p)/p}{1 -1/p} \frac{X}{\log(X)}.$$ 
         In particular, there exist infinitely many positive integers $n$ such that $$\mathrm{rk }  E_n(\mathbb{Q}) \leq 2\deg(\Delta)+2m+1. $$
         \item Assume that $\Delta(T)=p_1^{a_1} \cdots p_m^{a_m}T^{k_1}f(T)^{k_2}$, where $m \geq 0$, $k_1,k_2,a_1,...,a_m>0$, and $f(T) \neq \pm T$ is an irreducible polynomial with $\rho_{f}(p)<p$ for every prime $p$. Assume also that  $\rho_{f}(p)<p-1$ for every prime $p \leq \deg (f)+1$ with $p \nmid f(0)$. Then there exist constants $C(f)$ and $X_1(f)$ that depend only on $f$ such that for every $X \geq X_1(f)$ we have that
         $$\#\{n \: : \: 1 \leq n \leq X, \: \mathrm{rk } \: E_n(\mathbb{Q}) \leq 4\deg (f)+2m+2 \} \geq C(f)\frac{X}{\log^2(X)}.$$ 
         In particular, there exist infinitely many positive integers $n$ such that 
         $$\mathrm{rk }  E_n(\mathbb{Q}) \leq 4\deg (f)+2m+2. $$
          
         \item More generally, write $\Delta(T)=p_1^{a_1} \cdots p_m^{a_m} f_1(T)^{h_1} \cdots f_g(T)^{h_g}$, where $p_1,...,p_m$  are distinct primes for some $m \geq 0$ and $f_1(T),...,f_g(T)$ are distinct irreducible polynomials with integral coefficients for some $g \geq 1$. Assume that $\rho_{f_1(T) \cdots f_g(T)}(p)<p$ for every prime $p$. Then there exists a positive constant $C(f_1,...,f_g)$ that depends on $f_1,...,f_g$ such that for all $X$ sufficiently large we have that
         $$\#\{n \: : \: 1 \leq n \leq X, \: \mathrm{rk } \: E_n(\mathbb{Q}) \leq 2(m+s) -1 \} \geq C(f_1,...,f_g) \frac{X}{\log^g(X)},$$ where $s$ is a positive integer that can be explicitly computed and depends on $\deg \Delta(T)$ and $g$.

    \end{enumerate}
\end{theorem}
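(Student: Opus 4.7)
The plan is to combine Theorem \ref{thmdescent}(ii) with the almost-prime sieve results of Theorems \ref{thmdivisors} and \ref{sievesbookthm}. The unifying mechanism is the elementary observation that, for any integer $n$ for which $E_n/\mathbb{Q}$ is smooth and inherits the $2$-torsion from the generic fiber (which excludes only finitely many $n$), every prime of bad reduction of $E_n/\mathbb{Q}$ divides $\Delta(n) \in \mathbb{Z}$. Writing $\alpha_n$ and $\mu_n$ for the numbers of additive and multiplicative places, this yields $\alpha_n + \mu_n \leq \omega(\Delta(n))$, the number of distinct prime divisors of $\Delta(n)$, and Theorem \ref{thmdescent}(ii) then gives $\mathrm{rk}\, E_n(\mathbb{Q}) \leq 2\alpha_n + \mu_n - 1 \leq 2\omega(\Delta(n)) - 1$.

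For part (i), I would apply Theorem \ref{thmdivisors}(i) to $f(T)$, which is admissible because $\rho_f(p) < p$ for every prime $p$. This produces, for $X \geq X_0(f)$, the quantitative count of integers $1 \leq n \leq X$ with $f(n) \in P_{\deg(f)+1}$. For each such $n$, the factorization $\Delta(n) = p_1^{a_1}\cdots p_m^{a_m} f(n)^k$ gives $\omega(\Delta(n)) \leq m + \omega(f(n)) \leq m + \deg(f) + 1$, whence $\mathrm{rk}\, E_n(\mathbb{Q}) \leq 2(m + \deg(f) + 1) - 1 \leq 2\deg(\Delta) + 2m + 1$. After subtracting a fixed constant to discard the finitely many degenerate specializations, the count transfers directly from Theorem \ref{thmdivisors}(i), exactly as in the proof of Theorem \ref{thm1stfamily}.

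For part (ii), I would sieve over primes instead of integers, applying Theorem \ref{thmdivisors}(ii) to $f(T)$; the two hypotheses on $\rho_f$ in the statement are exactly what the theorem requires. This produces infinitely many primes $p$ (with lower bound of order $X/\log^2 X$) for which $f(p) \in P_{2\deg(f)+1}$, and I would take $n = p$. Then $\Delta(p) = p_1^{a_1}\cdots p_m^{a_m}\, p^{k_1}\, f(p)^{k_2}$, and the crucial extra observation is that $p$ itself is a prime of \emph{multiplicative} (not additive) reduction for $E_p/\mathbb{Q}$: since $c_4(T) \in \mathbb{Z}[T]$, one has $c_4(p) \equiv c_4(0) \pmod{p}$, so for all sufficiently large primes $p$, $p \nmid c_4(p)$. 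Splitting the contributions to $2\alpha_p + \mu_p - 1$ across the three blocks of bad primes --- the $p_i$ (contributing at most $2m$), the prime $p$ itself (contributing exactly $1$), and the distinct prime factors of $f(p)$ (contributing at most $2(2\deg(f)+1)$) --- yields the claimed $4\deg(f) + 2m + 2$.

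For part (iii), I would apply Theorem \ref{sievesbookthm} to the product $F(T) = f_1(T) \cdots f_g(T)$, which is admissible under the joint hypothesis $\rho_F(p) < p$; this produces, for $X$ large, at least $C(f_1,\ldots,f_g)\, X/\log^g X$ integers $n \leq X$ with $F(n) \in P_s$ for the computable $s$ depending on $F$. Then $\omega(\Delta(n)) \leq m + s$, and Theorem \ref{thmdescent}(ii) yields $\mathrm{rk}\, E_n(\mathbb{Q}) \leq 2(m+s) - 1$. The only genuinely delicate step is the one-unit saving in part (ii): without the multiplicative-reduction observation at $p$ the naive count would only give $4\deg(f) + 2m + 3$. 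Identifying the reduction type at $p$ via the $c_4$-invariant is the sole place where the specific Weierstrass model enters; the remaining arguments are direct translations of sieve counts into rank counts via Theorem \ref{thmdescent}(ii).
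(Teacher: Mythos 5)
Your parts (i) and (iii) follow the paper's argument exactly: sieve for almost-prime values of $f$ (respectively of $f_1\cdots f_g$), bound the number of bad primes of $E_n$ by the number of distinct prime divisors of $\Delta(n)$, and feed that count into Theorem \ref{thmdescent}(ii). Those two parts are fine and match the paper.

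Part (ii) is where you depart from the paper, and you have correctly diagnosed the issue: with $n=p$ prime and $f(p)\in P_{2\deg(f)+1}$, the naive count gives at most $m+1+(2\deg(f)+1)$ bad primes and hence only $\mathrm{rk}\,E_p(\mathbb{Q})\le 4\deg(f)+2m+3$. (The paper's own proof in fact stops at this naive count and reaches $4\deg(f)+2m+2$ only via the arithmetic slip $2(2\deg(f)+2+m)-1=4\deg(f)+2m+2$.) Your proposed one-unit saving --- that $p$ itself is a prime of \emph{multiplicative} reduction for $E_p$ --- is the right kind of idea, but the step ``$c_4(p)\equiv c_4(0)\pmod p$, hence $p\nmid c_4(p)$ for $p$ large'' silently assumes $c_4(0)\neq 0$, and nothing in the hypotheses guarantees this. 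Since $\Delta(0)=0$, the relation $c_4^3-c_6^2=1728\Delta$ only gives $c_4(0)^3=c_6(0)^2$, which is compatible with $c_4(0)=c_6(0)=0$, i.e.\ with the fiber at $T=0$ being cuspidal. A concrete instance is $y^2=x^3+Tx^2+Tx$: it has the $2$-torsion point $(0,0)$, is non-isotrivial, and has $\Delta(T)=16\,T^3(T-4)$, which satisfies every hypothesis of part (ii) with $f(T)=T-4$ and $m=1$; but $c_4(T)=16T(T-3)$, so $p\mid c_4(p)$ for every prime $p$ and, for $p$ large, $E_p$ has additive reduction at $p$. In such a case your argument, carried out correctly, yields only $4\deg(f)+2m+3$. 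So either an extra hypothesis must be imposed (e.g.\ $c_4(0)\neq 0$, equivalently that the fiber of the elliptic surface at $T=0$ is of multiplicative type --- this holds, for instance, in the paper's example $\Delta(T)=T^4(1+16T)$, where $c_4(0)=1$), or the exponent in part (ii) has to be relaxed by one. As written, your part (ii) has a genuine gap, albeit one that points at a real defect in the statement being proved rather than at a defect in your strategy.
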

\begin{proof}
    {\it Proof of Part $(i)$:} Let $E/\mathbb{Q}(T)$ be an elliptic curve as in $(i)$. Since $E/\mathbb{Q}(T)$ has a torsion point of order $2$ and the torsion subgroup of $E(\mathbb{Q}(T))$ injects in $E_t(\mathbb{Q})$ when $E_t/\mathbb{Q}$ is non-singular, we find that $E_t/\mathbb{Q}$ contains a point of order $2$ for all but finitely many $E_t/\mathbb{Q}$. On the other hand, applying Part $(i)$ of Theorem \ref{thmdivisors} to the polynomial $f(T)$ we find that there exists a constant $X_0(f)$ that depends on $f$ such that for every $X \geq X_0(f)$ we have that $$\#\{n \: : \: 1 \leq n \leq X, \: f(n) \in P_{\deg (f)+1} \} \geq \frac{2}{3} \prod_p \frac{1-\rho_{f}(p)/p}{1 -1/p} \frac{X}{\log(X)}.$$
    Recall that $\Delta(T)=p_1^{a_1} \cdots p_m^{a_m} f(T)^k$ and note that for each $n \in \mathbb{N}$ the primes of bad reduction of $E_n/\mathbb{Q}$ are a subset of the primes that divide $\Delta(n)$. Therefore, applying Part $(ii)$ of Theorem \ref{thmdescent} to each $E_n/\mathbb{Q}$ we find that if $f(n) \in P_{\deg (f)+1}$, then the rank of $E_n/\mathbb{Q}$ is at most $$2(\deg(f)+1+m)-1=2\deg(f)+2m+1.$$ Finally, since $\deg (f)=\deg (\Delta)$ our proof is complete.

    {\it Proof of Part $(ii)$:} Let now $E/\mathbb{Q}(T)$ be an elliptic curve as in $(ii)$. Similarly as in the previous part, we find that $E_t/\mathbb{Q}$ contains a point of order $2$ for all but finitely many $E_t/\mathbb{Q}$. On the other hand, applying Part $(ii)$ of Theorem \ref{thmdivisors} to the polynomial $f(T)$ we find that there exist positive constants $C(f)$ and $X_0(f)$ that depend on $f$ such that for every $X \geq X_0(f)$ we have that $$\#\{ p \text{ prime } \: : \: 1 \leq p \leq X, \: f(p) \in P_{2 \deg (f)+1} \} \geq C(f) \frac{X}{\log^2(X)}.$$ Recall $\Delta(T)=p_1^{a_1} \cdots p_m^{a_m}T^{k_1}f(T)^{k_2}$ and note that for each $n \in \mathbb{N}$ the primes of bad reduction of $E_n/\mathbb{Q}$ are a subset of the primes that divide $\Delta(n)$. Therefore, applying Part $(ii)$ of Theorem \ref{thmdescent} to each $E_n/\mathbb{Q}$ we find that if $n$ is prime with $f(n) \in P_{2 \deg (f)+1}$ the rank of $E_n/\mathbb{Q}$ is at most $$2(2 \deg (f) +2+m)-1=4\deg (f)+2m+2.$$

    {\it Proof of Part $(iii)$:}  Let now $E/\mathbb{Q}(T)$ be an elliptic curve as in $(iii)$. Similarly as in the previous parts, we find that $E_t/\mathbb{Q}$ contains a point of order $2$ for all but finitely many $E_t/\mathbb{Q}$. On the other hand, applying Theorem \ref{sievesbookthm} to the polynomial $f_1(T) \cdots f_g(T)$ we find that there exists a positive integer $s$ that can be explicitly computed and depends on $f_1(T),...,f_g(T)$ and a positive constant $C(f_1,...,f_g)$ that depends on $f_1(T),...,f_g(T)$ such that for all $X$ sufficiently large we have that $$\#\{n \: : \: 1 \leq n \leq X, \: f_1(n) \cdots f_g(T) \in P_s \} \geq C(F)\frac{X}{\log^g(X)}.$$ Recall that $\Delta(T)=p_1^{a_1} \cdots p_m^{a_m} f_1(T)^{h_1} \cdots f_g(T)^{h_g}$ and note that for each $n \in \mathbb{N}$ the primes of bad reduction of $E_n/\mathbb{Q}$ are a subset of the primes that divide $\Delta(n)$. Therefore, applying Part $(ii)$ of Theorem \ref{thmdescent} to each $E_n/\mathbb{Q}$ we find that if $n$ is prime with $f(n) \in P_{2 \deg (f)+1}$ the rank of $E_n/\mathbb{Q}$ is at most $$2(s+m)-1=2s+2m-1.$$ This completes the proof of our theorem.
     
\end{proof}

We end this article by presenting some examples where Theorem \ref{thmlowerbounds} can be applied to find explicit bounds for the ranks of infinitely many specializations.

\begin{example}
    Consider the elliptic curve $E/\mathbb{Q}(T)$ given by the equation $$y^2=x^3+g(T)x^2-\lambda x,$$ where $g(T) \in \mathbb{Z}[T]$ and $\lambda$ is a positive integer. The above equation has discriminant $$\Delta(T)=16(-\lambda)^2(g(T)^2+4\lambda).$$ When $g(T)^2+4\lambda $ is an irreducible polynomial such that $\rho_g(p)<p$ for every prime $p$, Part $(i)$ of Theorem \ref{thmlowerbounds} shows that there exist infinitely many positive integers $n$ such that $$\mathrm{rk }  E_n(\mathbb{Q}) \leq 4\deg(g)+2v(\lambda)+3, $$ where $v(\lambda)$ is the number of positive prime divisors of $\lambda$.
\end{example}

\begin{example}
    Consider the elliptic curve $E/\mathbb{Q}(T)$ given by the equation $$y^2+xy-Ty=x^3-Tx^2,$$ which has discriminant $$\Delta(T)=T^4(1+16T).$$ It is well known (see \cite[Section 4.4]{hus}) that if $E'/\mathbb{Q}$ is any elliptic curve with a torsion point of order $4$, then there exists $\lambda \in \mathbb{Q}$ such that $E'/\mathbb{Q}$ is isomorphic to $E_{\lambda}/\mathbb{Q}$. According to Part $(ii)$ of Theorem \ref{thmlowerbounds} we see that there exist infinitely many integers $n$ such that the rank of $E_n/\mathbb{Q}$ is at most $6$. The reader can easily check that the discriminant of $E/\mathbb{Q}(T)$ satisfies the hypotheses of of Part $(ii)$ of Theorem \ref{thmlowerbounds}.
\end{example}

\begin{remark}\label{rmks}
    Keep the same notation as in the Theorem \ref{thmlowerbounds}. We note that given an elliptic curve that satisfies the conditions of Part $(iii)$ of Theorem \ref{thmlowerbounds} then the corresponding number $s$ can be explicitly computed. A formula for $s$ can be found on \cite[Page 283]{sievemethodsbook}. In the simple case where $g=2$, i.e., when $$\Delta(T)=p_1^{a_1} \cdots p_m^{a_m}f_1(T)^{h_1} f_2(T)^{h_2},$$ then $s$ can be computed based only on the degree of $\Delta(T).$ For example, when $\deg (\Delta(T))$ is equal to $3,4,5,$ or $6$, then $s$ is equal to $7,9,10,$ or $11$, respectively (see \cite[Page 287]{sievemethodsbook}).
\end{remark}

\begin{example}
    Consider the elliptic curve $E/\mathbb{Q}(T)$ given by the equation $$y^2=x^3+(T+1)x^2-(T^2+1)x$$ which has discriminant $$\Delta(T)=16(T^2+1)^2((T+1)^2+4(T^2+1))=16(T^2+1)^2(5T^2+2T+5).$$ According to  Part $(iii)$ of Theorem \ref{thmlowerbounds} combined with Remark \ref{rmks} we find that there exist infinitely many $n$ such that the rank of $E_n/\mathbb{Q}$ is at most $9$.
\end{example}

\bibliographystyle{plain}
\bibliography{bibliography.bib}

\end{document}